\renewcommand\eqref[1]{(\ref{#1})} 
\title[Blow-up solutions of damped Klein-Gordon equation]{Blow-up solutions of damped Klein-Gordon equation on the Heisenberg group}
\author[Michael Ruzhansky]{Michael Ruzhansky}
\address{\href{www.ruzhansky.org}{Michael Ruzhansky:}
	\endgraf
	Department of Mathematics: Analysis, Logic and Discrete Mathematics
	\endgraf
	Ghent University, Belgium
	\endgraf
	and
	\endgraf
	School of Mathematical Sciences
	\endgraf Queen Mary University of London 
	\endgraf
	United Kingdom
	\endgraf
	{\it E-mail address} {\rm Michael.Ruzhansky@ugent.be}
}
\author[Bolys Sabitbek]{Bolys Sabitbek}
\address{ \href{http://analysis-pde.org/bolys-sabitbek/}{Bolys Sabitbek:}
	\endgraf
	School of Mathematical Sciences
	\endgraf Queen Mary University of London 
	\endgraf
	United Kingdom
	\endgraf 
	and
	\endgraf 
	Al-Farabi Kazakh National University 
	\endgraf 
	Almaty,
	Kazakhstan 
	\endgraf
	{\it E-mail address} {\rm b.sabitbek@qmul.ac.uk}
}
\subjclass{35L71; 35R03, 35B44.}
\keywords{Blow-up, sub-Laplacian, Heisenberg group, damped Klein-Grodon equation}
\thanks{The first and second authors were supported by EPSRC grant EP/R003025/2. The first author was also supported by FWO Odysseus 1 grant G.0H94.18N: Analysis and Partial Differential Equations and the Methusalem programme of the Ghent University Special Research Fund (BOF) (Grant number 01M01021).}
\newtheoremstyle{theorem}
{10pt}          
{10pt}  
{\sl}  
{\parindent}     
{\bf}  
{. }    
{ }    
{}     
\theoremstyle{theorem}
\numberwithin{equation}{section}
\theoremstyle{plain}
\newtheorem{thm}{Theorem}[section]
\theoremstyle{definition}
\newtheorem{defn}[thm]{Definition}
\newtheorem{rem}[thm]{Remark}
\newtheoremstyle{defi}
{10pt}          
{10pt}  
{\rm}  
{\parindent}     
{\bf}  
{. }    
{ }    
{}     
\theoremstyle{defi}
\begin{document}
	\begin{abstract}
		In this note, we prove the blow-up of solutions of the semilinear damped Klein-Gordon equation in a finite time for arbitrary positive initial energy on the Heisenberg group. This work complements the paper \cite{RT-18} by the first author and Tokmagambetov, where the global in time well-posedness was proved for the small energy solutions.
	\end{abstract}
	\maketitle
	\section{Introduction}
\subsection{Setting of the problem} This note is devoted to study the blow up of solutions of the Cauchy problem for the semilinear damped Klein-Gordon equation for the sub-Laplacian $\mathcal{L}$ on the Heisenberg group $\mathbb{H}^n$:
\begin{align}\label{Wave-problem}
\begin{cases}
u_{tt}(t) - \mathcal{L} u(t) + bu_t(t)+ mu(t)= f(u), & t>0, \\ 
u(x,0)   = u_0(x),\,\,\, & u_0 \in H^1_{\mathcal{L}}(\mathbb{H}^n),\\
u_t(x,0) = u_1(x),\,\,\, & u_1 \in L^2(\mathbb{H}^n),
\end{cases}
\end{align}
with the damping term determined by $b>0$ and the mass $m>0$. A total energy of problem \eqref{Wave-problem} is defined as
\begin{align*}
E(t)&=\frac{1}{2}|| u_t||^2_{L^2(\mathbb{H}^n)} + \frac{m}{2}||u||^2_{L^2(\mathbb{H}^n)}+ \frac{1}{2}||\nabla_{H} u||^2_{L^2(\mathbb{H}^n)} - \int_{\mathbb{H}^n}F(u) dx,
\end{align*}
where we assume the following 
\begin{align}\label{Cond-F}
&	g: [0,\infty] \rightarrow \mathbb{R},\nonumber\\
&	F(z) = g(|z|) \,\, \text{  for } \,\,  z \in \mathbb{C}^n, \\
&   f(z) = \frac{g'(|z|)z}{|z|}. \nonumber
\end{align}
Then we have that
\begin{align*}
	\frac{\partial}{\partial\varepsilon} F(z+ \varepsilon \xi) |_{\varepsilon=0} &= \frac{\partial}{\partial \varepsilon} g(|z + \varepsilon \xi|)|_{\varepsilon =0} \\
	& = g'(|z+\varepsilon \xi| )\frac{\partial}{\partial \varepsilon}(|z+\varepsilon \xi|)|_{\varepsilon=0} \\
	&= \frac{g'(|z|)}{|z|} \frac{1}{2}(\overline{z}\xi + z \overline{\xi}) \\
	&={\rm Re } \left( f(z) \overline{\xi}\right),
\end{align*}
and
\begin{align*}
	\frac{\partial}{\partial x_j}F(u(x)) &= g'(|u(x)|) \frac{\partial|u(x)| }{\partial x_j}\\
	&= \frac{g'(|u(x)|)}{2|u(x)|} \left(u(x)\frac{\partial \overline{u}(x) }{\partial x_j}  +\frac{\partial u(x) }{\partial x_j}\overline{u}(x)\right)\\
	&= {\rm Re }\left( f(u(x)) \frac{\partial \overline{u}(x)}{\partial x_j}\right).
\end{align*}
The conservation of energy law follows from
\begin{align*}
\frac{\partial E(t)}{\partial t} & = \frac{\partial }{\partial t}\left[ \frac{1}{2} || u_t||^2_{L^2(\mathbb{H}^n)} +\frac{m}{2}||u||^2_{L^2(\mathbb{H}^n)}+ \frac{1}{2}||\nabla_{H} u||^2_{L^2(\mathbb{H}^n)}  - \int_{\mathbb{H}^n} F(u) dx \right]\\
& = {\rm Re }\int_{\mathbb{H}^n } \overline{u}_t [u_{tt} + m u - \mathcal{L}u - f(u)] dx\\
& = - b \int_{\mathbb{H}^n } |u_t|^2 dx,
\end{align*} 
this gives
\begin{equation}\label{eq-energy}
E(t) + b\int_{0}^t|| u_s(s) ||^2_{L^2(\mathbb{H}^n)}ds =E(0),
\end{equation}
where 
\begin{equation*}
E(0):=\frac{1}{2} || u_1||^2_{L^2(\mathbb{H}^n)} +\frac{m}{2}||u_0||^2_{L^2(\mathbb{H}^n)}+ \frac{1}{2}||\nabla_{H} u_0||^2_{L^2(\mathbb{H}^n)}  - \int_{\mathbb{H}^n} F(u_0) dx.
\end{equation*}
Also, let us define the Nehari functional 
\begin{align*}
I(u) = m||u||^2_{L^2(\mathbb{H}^n)} + ||\nabla_{H} u||^2_{L^2(\mathbb{H}^n)}  - {\rm Re } \int_{\mathbb{H}^n}f(u)\overline{u} dx.
\end{align*}
We assume that the nonlinear term $f(u)$ satisfies the condition
\begin{equation*}
	f(0)=0, \,\,\, \text{  and } \,\, \alpha F(u) \leq {\rm Re } [ f(u)\overline{u}],
\end{equation*} 
where $\alpha >2$. In particular, this includes the case
\begin{equation*}
	f(u) = |u|^{p-1}u\,\,\,\text{  for } p>1.
\end{equation*}
\subsection{Literature overview } 
The study of the damped wave equation on the Heisenberg group started in Bahouri-Gerrard-Xu \cite{BGX-00} to prove the dispersive and Strichartz inequalities based on the analysis in Besov-type spaces. Later, Greiner-Holcman-Kannai \cite{GHK-02} explicitly computed the wave kernel for the class of second-order subelliptic operators, where their class contains degenerate elliptic and hypoelliptic operators such as the sub-Laplacian and the Grushin operator. Also, M\"uller-Stein \cite{MS-99} established $L^p$-estimates for the wave equation on the Heisenberg group. Recently, M\"uller-Seeger \cite{MS-15} obtained the sharp version of $L^p$ estimates on the $H$-type groups. 

The blow-up solutions of evolution equations on the Heisenberg group were considered by Georgiev-Palmieri \cite{GP-20} where they proved the global existence and nonexistence results of the Cauchy problem for the semilinear damped wave equation on the Heisenberg group with the power nonlinear term. The proof of blow-up solutions is based on the test function method. The first author and Yessirkegenov \cite{RY-22} established the existence and non-existence of global solutions for semilinear heat equations and inequalities on sub-Riemannian manifolds. In \cite{RY-22-1}, by using the comparison principle they obtain blow-up type results and global in $t$-boundedness of solutions of nonlinear equations for the heat $p$-sub-Laplacian on the stratified Lie groups. The global existence and nonexistence for the nonlinear porous medium equation were studied by the authors in \cite{RST-21} on the stratified Lie groups.

This work is motivated by the paper \cite{RT-18} of the first author and Tokmagambetov where the global existence of solutions for small data of problem \eqref{Wave-problem} was shown on the Heisenberg group and on general graded Lie groups. In the sense of the potential wells theory, we can understand this result in the sense that when the initial energy is less than the mountain pass level $E(0)<d$ and the Nehari functional is positive $I(u_0)>0$, there exists a global solution of the problem \eqref{Wave-problem}. A natural question arises when the solution of problem \eqref{Wave-problem} blows up in a finite time or $E(0)>0$ and $I(u_0)<0$. 

The main aim of this paper is to obtain the blow-up solutions of problem \eqref{Wave-problem} in a finite time for arbitrary positive initial energy. Our proof is based on an adopted concavity method, which was introduced by Levine \cite{Levine-74} to establish the blow-up solutions of the abstract wave equation of the form $Pu_{tt}=-Au + F(u)$ (including the Klein-Gordon equation) for the negative initial energy. It was also used for parabolic type equations (see \cite{Levine-74-1, Levine73, Levine-90, LP-74, LP2-74}). Modifying the concavity method, Wang \cite{W-08} proved the nonexistence of global solutions to nonlinear damped Klein-Gordon equation for arbitrary positive initial energy under sufficient conditions. Later, Yang-Xu \cite{YX-18} extended this result by introducing a new auxiliary function and the adopted concavity method.

\subsection{Preliminaries on the Heisenberg group} Let us give a brief introduction of the Heisenberg group.
Let $\mathbb{H}^n$ be the Heisenberg group, that is, the set $\mathbb{R}^{2n+1}$ equipped with the group law 
\begin{equation*}
\xi \circ \widetilde{\xi} := (x + \widetilde{x}, y + \widetilde{y}, t + \widetilde{t}+2 \sum_{i=1}^{n}(\widetilde{x}_i y_i - x_i \widetilde{y}_i )),
\end{equation*}
where $\xi:= (x,y,t) \in \mathbb{H}^n$, $x:=(x_1,\ldots,x_n)$, $y:=(y_1,\ldots,y_n)$, and $\xi^{-1}=-\xi$ is the inverse element of $\xi$ with respect to the group law. The dilation operation of the Heisenberg group with respect to the group law has the following form (see e.g. \cite{FR-book}, \cite{RS_book})
\begin{equation*}
\delta_{\lambda}(\xi) := (\lambda x, \lambda y, \lambda^2 t) \,\, \text{for}\,\, \lambda>0.
\end{equation*} 
The Lie algebra $\mathfrak{h}$ of the left-invariant vector fields on the Heisenberg group $\mathbb{H}^n$ is spanned by 
\begin{equation*}
X_i:= \frac{\partial }{\partial x_i} + 2y_i\frac{\partial }{\partial t} \,\, \text{for} \,\, 1\leq i \leq n,
\end{equation*}
\begin{equation*}
Y_i:= \frac{\partial }{\partial y_i} - 2x_i\frac{\partial }{\partial t} \,\, \text{for} \,\, 1\leq i \leq n,
\end{equation*}
and  with their (non-zero) commutator
\begin{equation*}
[X_i,Y_i]= - 4 \frac{\partial}{\partial t}.
\end{equation*} 
The horizontal gradient of $\mathbb{H}^n$ is given by 
\begin{equation*}
\nabla_{H}:= (X_1,\ldots,X_n,Y_1,\ldots,Y_n),
\end{equation*} 
so the sub-Laplacian on $\mathbb{H}^n$ is given by
\begin{equation*}
\mathcal{L}:= \sum_{i=1}^{n} \left(X_i^2 + Y_i^2\right). 
\end{equation*} 

\begin{defn}[Weak solution]
	A function 
	\begin{align*}
	&u \in C ([0,T_1);H^1_{\mathcal{L}}(\mathbb{H}^n)) \cap C^1 ([0,T_1);L^2(\mathbb{H}^n)), \\
	&u_t \in L^2 ([0,T_1);H^1_{\mathcal{L}}(\mathbb{H}^n)),\\
	&u_{tt} \in L^2 ([0,T_1);H^{-1}_{\mathcal{L}}(\mathbb{H}^n)), 
	\end{align*}
	satisfying
	\begin{equation}\label{eq-weak}
	{\rm Re} \langle u_{tt}, v \rangle + {\rm Re}\int_{\mathbb{H}^n } \nabla_{H} u \cdot \nabla_{H} v dx +m{\rm Re} \int_{\mathbb{H}^n } uv dx + b  {\rm Re}\int_{\mathbb{H}^n } u_tv dx\ =  {\rm Re}\int_{\mathbb{H}^n } f(u)v dx,
	\end{equation}
	for all $v \in H^1_{\mathcal{L}}(\mathbb{H}^n)$ and a.e. $t \in [0,T_1)$ with $u(0)=u_0(x)$ and $u_t(0)=u_1(x) $ represents a weak solution of problem \eqref{Wave-problem}.
\end{defn}

Note that $T_1$ denotes the lifespan of the solution $u(x,t)$ and $\langle \cdot, \cdot \rangle$ is the duality between $H^{-1}_{\mathcal{L}}(\mathbb{H}^n)$ and $H^{1}_{\mathcal{L}}(\mathbb{H}^n)$. Here $H^1_{\mathcal{L}}(\mathbb{H}^n)$ denotes the sub-Laplacian Sobolev space, analysed by Folland \cite{Fol-75}, see also \cite{FR-book}.

\section{Main Result}

We now present the main result of this paper.

\begin{thm}\label{thm_main}
		Let $b>0$, $m>0$ and $\mu= \max\{ b,m, \alpha \}$. Assume that nonlinearity $f(u)$ satisfies 
	\begin{equation}\label{eq-2.1}
	\alpha F(u) \leq {\rm Re}[ f(u)\overline{u}]\,\,\, \text{  for }\,\, \alpha >2,
	\end{equation}
	where $F(u)$ is as in \eqref{Cond-F}.
	Assume that the Cauchy data $u_0 \in H_{\mathcal{L}}^1(\mathbb{H}^n)$ and $u_1 \in L^2(\mathbb{H}^n)$  satisfy
	\begin{equation}\label{eq-2.2}
	I(u_0) =  m||u_0||^2_{L^2(\mathbb{H}^n)} + ||\nabla_{H} u_0||^2_{L^2(\mathbb{H}^n)} - {\rm Re}\int_{\mathbb{H}^n}\overline{u}_0f(u_0) dx<0,
	\end{equation}
	and 
	\begin{equation}\label{eq-thm-cond}
	 {\rm Re}(u_0,u_1)_{L^2(\mathbb{H}^n)}\geq \frac{\alpha(\mu+1)}{m(\alpha-2)}E(0).
	\end{equation}
	Then the solution of equation \eqref{Wave-problem} blows up in finite time $T^*$ such that 
	\begin{equation*}
		0< T^* \leq \frac{2(\mu+1)(bT_0 +1)}{(\alpha-2)(\mu+1-m)} \frac{||u_0||^2_{L^2(\mathbb{H}^n)}}{{\rm Re}(u_0,u_1)},
	\end{equation*}
	where  the blow-up time $T^* \in (0,T_0)$ with $ T_0<+\infty$.
\end{thm}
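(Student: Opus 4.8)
The plan is to use the concavity method of Levine, adapted to handle the damping term and arbitrary positive initial energy as in Wang and Yang–Xu. First I would introduce the auxiliary function
\[
M(t) = \|u(t)\|_{L^2(\mathbb{H}^n)}^2 + b\int_0^t \|u(s)\|_{L^2(\mathbb{H}^n)}^2\,ds + (T_0-t)\|u_0\|_{L^2(\mathbb{H}^n)}^2 + \beta(t+\sigma)^2,
\]
for suitable positive constants $\beta,\sigma$ to be fixed later, and show that for some $\gamma>0$ the function $M(t)^{-\gamma}$ is concave, i.e. $M M'' - (1+\gamma)(M')^2 \geq 0$; since $M(0)>0$ and $M'(0)>0$ can be arranged, this forces $M(t)\to\infty$ in finite time, with the explicit bound on $T^*$ coming from the standard ODE comparison. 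The choice of the $(T_0-t)$ term and the history term $b\int_0^t\|u\|^2$ is exactly what absorbs the damping $bu_t$ when one differentiates, and the quadratic $\beta(t+\sigma)^2$ is the Yang–Xu device that lets positive energy be handled.

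Next I would compute $M'(t) = 2\,{\rm Re}(u,u_t)_{L^2} + b\|u\|_{L^2}^2 - \|u_0\|_{L^2}^2 + 2\beta(t+\sigma)$; using $\|u(t)\|_{L^2}^2 = \|u_0\|_{L^2}^2 + b^{-1}\cdot b\int_0^t\frac{d}{ds}\|u\|^2\,ds$ plus $\frac{d}{ds}\|u\|^2 = 2\,{\rm Re}(u,u_s)$, one rewrites $M'$ purely in terms of $\int_0^t {\rm Re}(u,u_s)\,ds$ and $2\,{\rm Re}(u,u_t) + 2\beta(t+\sigma)$, which is the form needed for a Cauchy–Schwarz estimate. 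Then $M''(t) = 2\|u_t\|_{L^2}^2 + 2\,{\rm Re}(u,u_{tt})_{L^2} + 2b\,{\rm Re}(u,u_t)_{L^2} + 2\beta$, and here I substitute the equation: from the weak formulation \eqref{eq-weak} with $v=u$, ${\rm Re}(u,u_{tt}) = -\|\nabla_H u\|_{L^2}^2 - m\|u\|_{L^2}^2 - b\,{\rm Re}(u,u_t) + {\rm Re}\int f(u)\overline{u}\,dx$, so the damping terms cancel and
\[
M''(t) = 2\|u_t\|_{L^2}^2 + 2\beta - 2\|\nabla_H u\|_{L^2}^2 - 2m\|u\|_{L^2}^2 + 2\,{\rm Re}\int_{\mathbb{H}^n} f(u)\overline{u}\,dx.
\]

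Then the crucial algebra: I would use $-I(u) = {\rm Re}\int f(u)\overline u - m\|u\|^2 - \|\nabla_H u\|^2$ and the energy identity \eqref{eq-energy} together with the structural hypothesis \eqref{eq-2.1}, namely ${\rm Re}\int f(u)\overline u \ge \alpha\int F(u) = \alpha\bigl(\tfrac12\|u_t\|^2 + \tfrac m2\|u\|^2 + \tfrac12\|\nabla_H u\|^2 - E(0) + b\int_0^t\|u_s\|^2\bigr)$, to bound $M''(t)$ below by $(\alpha+2)\|u_t\|^2 + 2\beta + \alpha b\int_0^t\|u_s\|^2 + (\text{good }\|\nabla_H u\|^2,\|u\|^2\text{ terms}) - \alpha\cdot 2 E(0)$ — essentially $M'' \gtrsim (\alpha+2)\bigl(\|u_t\|^2 + b\int_0^t\|u_s\|^2 + \beta\bigr) - C E(0)$ for an appropriate reorganization; choosing $\beta$ in terms of $E(0)$ (this is where $\mu=\max\{b,m,\alpha\}$ and the factor $\alpha(\mu+1)/(m(\alpha-2))$ enter) makes the $-E(0)$ term harmless and yields $M''(t) \ge (\alpha+2)\bigl(\|u_t\|^2 + b\int_0^t\|u_s\|^2 + \beta\bigr)$. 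Combining this with the Cauchy–Schwarz inequality
\[
(M'(t)/2 + \tfrac12\|u_0\|^2)^2 = \Bigl({\rm Re}(u,u_t) + b\!\int_0^t\!{\rm Re}(u,u_s)\,ds + \beta(t+\sigma)\Bigr)^2 \le \bigl(\|u\|^2 + b\!\int_0^t\!\|u\|^2 + \beta(t+\sigma)^2\bigr)\bigl(\|u_t\|^2 + b\!\int_0^t\!\|u_s\|^2 + \beta\bigr) \le M(t)\cdot\frac{M''(t)}{\alpha+2},
\]
gives $M M'' \ge \tfrac{\alpha+2}{4}(M')^2$ up to the lower-order $\|u_0\|^2$ adjustment, hence $(M^{-\gamma})'' \le 0$ with $\gamma = (\alpha-2)/4>0$; then $M$ blows up at a time $\le M(0)/(\gamma M'(0))$, and unwinding the constants produces the stated bound on $T^*$.

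The main obstacle I expect is twofold: first, arranging that $M'(0)>0$ genuinely (this is precisely what hypothesis \eqref{eq-thm-cond}, ${\rm Re}(u_0,u_1)\ge \frac{\alpha(\mu+1)}{m(\alpha-2)}E(0)$, is designed to guarantee, once $\beta$ and $\sigma$ are chosen — keeping the bookkeeping of these two free parameters consistent across the $M'$-positivity requirement, the $\beta$-vs-$E(0)$ requirement in the $M''$ estimate, and the final time bound is the delicate part); and second, making the Cauchy–Schwarz step truly match the differential inequality despite the extra constant term $\tfrac12\|u_0\|_{L^2}^2$ in $M'(t)+\|u_0\|^2$ versus $M'(t)$ — one has to verify that this discrepancy is absorbed by the $(T_0-t)\|u_0\|^2$ term in $M(t)$, which is the classical Levine trick but requires care here because of the damping history term. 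Everything else is routine differentiation and application of \eqref{eq-2.1}, \eqref{eq-energy}, and the weak formulation \eqref{eq-weak}.
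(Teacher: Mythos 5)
Your overall strategy (Levine's concavity method with the functional $\|u\|^2 + b\int_0^t\|u\|^2\,ds + (T_0-t)\cdot(\dots)$ and a differential inequality $MM''-(1+\gamma)(M')^2\ge 0$) is the same as the paper's Step II, but there is a genuine gap at the heart of your argument: you never establish that the ``bad'' term $-2\alpha E(0)$ can actually be absorbed. After substituting the equation, the hypothesis \eqref{eq-2.1} and the energy identity \eqref{eq-energy}, the lower bound for $M''$ (or for the quantity $\eta(t)=M''-(\omega+3)(\|u_t\|^2+b\int_0^t\|u_s\|^2)$) contains, besides the good terms, the combination $(\alpha-2)m\|u(t)\|^2_{L^2}-2\alpha E(0)$. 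Since $E(0)>0$ is allowed to be arbitrary, closing the inequality requires a \emph{time-uniform} lower bound on $\|u(t)\|^2_{L^2}$ (more precisely on $A(t)=2\,{\rm Re}(u,u_t)+b\|u\|^2_{L^2}$) in terms of $E(0)$. This is exactly what the paper's Step I proves: by a continuity/contradiction argument one shows that $I(u(t))<0$ persists for all $t$ (hence $A'(t)=2\|u_t\|^2-2I(u)>0$), and therefore $A(t)>A(0)\ge 2\,{\rm Re}(u_0,u_1)\ge \frac{2\alpha(\mu+1)}{m(\alpha-2)}E(0)$, using both \eqref{eq-2.2} and \eqref{eq-thm-cond}. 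Your proposal uses \eqref{eq-2.2} and \eqref{eq-thm-cond} only to get $M'(0)>0$ and to ``choose $\beta$,'' which is not enough. Moreover, the Yang--Xu term $\beta(t+\sigma)^2$ with $\beta>0$ makes the requirement \emph{harder}, not easier: it adds $+(\alpha+2)\beta$ to the right-hand side you must dominate (that device is useful when $E(0)<0$, where one takes $\beta\sim -E(0)$; it cannot neutralize a positive $E(0)$). Without the invariance of the set $\{I<0\}$ and the resulting lower bound on $A(t)$, the concavity inequality simply does not close for arbitrary positive initial energy, which is the entire point of the theorem.

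Two smaller issues. First, your functional omits the factor $b$ in front of $(T_0-t)\|u_0\|^2_{L^2}$; with the paper's choice $b(T_0-t)\|u_0\|^2_{L^2}$ one gets the exact cancellation $M'(t)=2\,{\rm Re}(u,u_t)+2b\int_0^t{\rm Re}(u,u_s)\,ds$, and the Cauchy--Schwarz step matches $M(t)$ term by term with no leftover $\|u_0\|^2$ to absorb; your version forces the extra bookkeeping you yourself flag as delicate, and it is avoidable. Second, the correct concavity exponent here is not $\frac{\alpha+2}{4}$ but $\frac{\omega+3}{4}$ with $\omega=\alpha-1-\frac{m(\alpha-2)}{\mu+1}$: the slack $\alpha-\omega-1=\frac{m(\alpha-2)}{\mu+1}$ is deliberately spent to convert $\|u_t\|^2+(\mu+1)\|u\|^2$ into $A(t)$ (via $a^2+b^2\ge 2ab$), which is where the hypothesis \eqref{eq-thm-cond} and the constant $\mu=\max\{b,m,\alpha\}$ actually enter, and it is what produces the stated bound $T^*\le \frac{2(\mu+1)(bT_0+1)}{(\alpha-2)(\mu+1-m)}\frac{\|u_0\|^2_{L^2}}{{\rm Re}(u_0,u_1)}$. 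With $\gamma=(\alpha-2)/4$ as you propose, the constants in the conclusion do not come out.
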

\begin{rem} \begin{itemize}
		\item[(i)] Note that we have times $T^*$, $T_0$ and $T_1$. The relationship between this times is the blow-up time $T^*\in (0,T_0) \subset (0,T_1)$ where $T_0<+\infty$ and $T_1 = +\infty$.
		\item[(ii)]	The local existence for the Klein-Gordon equation was shown in \cite{Caz-85} and \cite{CH-98}. The global in time well-posedness of problem \eqref{Wave-problem} was proved by the first author and Tokmagambetov \cite{RT-18} for the small energy solutions and the nonlinearity $f(u)$ satisfying 
		\begin{align*}
		|f(u) - f(v)| \leq C (|u|^{p-1}+|v|^{p-1})|u-v|,
		\end{align*}
		with $1<p\leq 1+1/n$.
	\end{itemize}

\end{rem}
\begin{proof}[Proof of Theorem \ref{thm_main} ]
	First, recall the Nehari functional 
	\begin{align*}
	I(u) = m||u||^2_{L^2(\mathbb{H}^n)} + ||\nabla_{H} u||^2_{L^2(\mathbb{H}^n)}  -  {\rm Re}\int_{\mathbb{H}^n}\overline{u}f(u) dx.
	\end{align*}
Then the proof includes two steps.	
	
	\textbf{Step I.}
	In this step, we claim that 
	\begin{equation*}
	I(u(t))<0, \,\,\, \text{  and }	\,\,\, A(t) > \frac{2\alpha(\mu+1)}{m(\alpha-2)}E(0),
	\end{equation*}
	for $0\leq t <T_1$ where $\mu= \max\{ b,m, \alpha \}$ and
	\begin{align*}
	A(t) = 2 {\rm Re}(u,u_t) + b ||u||^2_{L^2(\mathbb{H}^n)}.
	\end{align*}
	By using \eqref{eq-weak} along with $v =\overline{u}$ we get
	\begin{align}\label{eq-*}
	A'(t) &= 2 || u_t||^2_{L^2(\mathbb{H}^n)} + 2{\rm Re }\langle u_{tt},u \rangle + 2b {\rm Re}\int_{\mathbb{H}^n } \overline{u}u_t dx \nonumber\\
	&= 2 || u_t||^2_{L^2(\mathbb{H}^n)} - 2 I(u), \,\,\, 0\leq t <T_1.
	\end{align}
	In the last line we have used that
	\begin{align*}
	{\rm Re}	\langle u_{tt}, u \rangle  &=  {\rm Re}\int_{\mathbb{H}^n } f(u)\overline{u} dx- \int_{\mathbb{H}^n } |\nabla_{H} u|^2 dx - m\int_{\mathbb{H}^n } |u|^2 dx - b  {\rm Re}\int_{\mathbb{H}^n } \overline{u} u_t dx\\
		&= - I(u) - b {\rm Re}\int_{\mathbb{H}^n } \overline{u}u_t dx.
	\end{align*}
	Now let us suppose by contradiction that 
	\begin{equation*}
	I(u(t))<0 \,\,\, \text{  for all } \,\, 0\leq t<t_0,
	\end{equation*}
	and 
	\begin{equation*}
	I(u(t_0))=0.
	\end{equation*}
	Hereafter $0<t_0<T_1$. It is easy to see that $A'(t)>0$ over $[0,t_0)$ and 
	\begin{align}\label{eq-A}
	A(t) > A(0) \geq 2 {\rm Re}(u_0,u_1) \geq  \frac{2\alpha(\mu+1)}{m(\alpha-2)}E(0).
	\end{align}
	Since $u(t)$ and $u_t(t)$ are both continuous in $t$ that gives 
	\begin{align}\label{eq-2.6}
	A(t_0) \geq  \frac{2\alpha(\mu+1)}{m(\alpha-2)}E(0).
	\end{align}
	Next we need to show a contradiction to \eqref{eq-2.6}. Using \eqref{eq-energy} and \eqref{eq-2.1}, we have
	\begin{align*}
	E(0) &= E(t) + b \int_{0}^t||u_s||^2_{L^2(\mathbb{H}^n)}ds \\
	& =\frac{1}{2} || u_t||^2_{L^2(\mathbb{H}^n)}+ \frac{m}{2}||u||^2_{L^2(\mathbb{H}^n)} + \frac{1}{2} ||\nabla_{H} u||^2_{L^2(\mathbb{H}^n)} \\
	&- \int_{\mathbb{H}^n } F(u)dx + b \int_{0}^t||u_s||^2_{L^2(\mathbb{H}^n)}ds\\
	&\geq \frac{1}{2} || u_t||^2_{L^2(\mathbb{H}^n)} + \frac{m}{2}||u||^2_{L^2(\mathbb{H}^n)} + \frac{1}{2} ||\nabla_{H} u||^2_{L^2(\mathbb{H}^n)} \\
	&- \frac{1}{\alpha} {\rm Re}\int_{\mathbb{H}^n } \overline{u} f(u)dx + b \int_{0}^t||u_s||^2_{L^2(\mathbb{H}^n)}ds\\
	& = \frac{1}{2} || u_t||^2_{L^2(\mathbb{H}^n)}  + \frac{1}{\alpha}I(u) + \left( \frac{m}{2} -\frac{m}{\alpha} \right)||u||^2_{L^2(\mathbb{H}^n)}\\
	&+\left(\frac{\alpha -2}{2\alpha} \right)||\nabla_{H} u||^2_{L^2(\mathbb{H}^n)} + b\int_{0}^t||u_s||^2_{L^2(\mathbb{H}^n)}ds.
	\end{align*}
	If we use $I(u(t_0))=0$ and $\frac{m(\alpha -2)}{\alpha(\mu+1)}<1$, then
	\begin{align}\label{eq-EA}
	E(0) &  \geq \frac{1}{2} || u_t(t_0)||^2_{L^2(\mathbb{H}^n)} + \frac{m(\alpha -2)}{2\alpha}||u(t_0)||^2_{L^2(\mathbb{H}^n)}\nonumber\\
	& \geq  \frac{m(\alpha -2)}{2\alpha(\mu +1)} \left( || u_t(t_0)||^2_{L^2(\mathbb{H}^n)}  + (\mu +1) ||u(t_0)||^2_{L^2(\mathbb{H}^n)} \right)\nonumber\\
	& > \frac{m(\alpha -2)}{2\alpha(\mu +1)} \left( 2 {\rm Re}(u(t_0),u_t(t_0)) + \mu ||u(t_0)||^2_{L^2(\mathbb{H}^n)} \right)\nonumber \\
	& \geq  \frac{m(\alpha -2)}{2\alpha(\mu +1)} A(t_0).
	\end{align}
	Note that for the strict inequality above we use that the assumption \eqref{eq-2.2} implies that $||u_0||_{L^2(\mathbb{H}^n)} \neq 0$.
	 We have also used the fact $a^2 + b^2 -2ab \geq 0$, where $a = || u_t(t_0)||_{L^2(\mathbb{H}^n)}$ and $b= || u(t_0)||_{L^2(\mathbb{H}^n)}$. It gives the contradiction to \eqref{eq-2.6}. This proves our claim. 
	
	\textbf{Step II.} Define the functional
	\begin{align*}
	M(t) = ||u||^2_{L^2(\mathbb{H}^n)} + b \int_{0}^t ||u(s)||^2_{L^2(\mathbb{H}^n)} ds + b (T_0-t)||u_0||^2_{L^2(\mathbb{H}^n)}, 
	\end{align*}
	for $0\leq t \leq T_0$. Then
	\begin{align*}
	M'(t) &= 2 {\rm Re}(u,u_t) +b||u(t)||^2_{L^2(\mathbb{H}^n)}-b||u_0||^2_{L^2(\mathbb{H}^n)}  \\
	&=2 {\rm Re}(u,u_t) +2b  \int_{0}^t  {\rm Re}(u(s),u_s(s))ds,
	\end{align*}
	since 
	\begin{align*}
		\int_{0}^t \frac{d}{ds} || u (s)||_{L^2(\mathbb{H}^n)}^2 ds = || u(t)||_{L^2(\mathbb{H}^n)}^2 -  || u(0)||_{L^2(\mathbb{H}^n)}^2.
	\end{align*}
	We observe the following estimates
	\begin{align*}
	|{\rm Re}(u,u_t)|^2 &\leq ||u_t||^2_{L^2(\mathbb{H}^n)} || u||^2_{L^2(\mathbb{H}^n)}, \\
	\left( \int_{0}^t |{\rm Re}(u(s),u_s(s))|ds  \right)^2& \leq \left(\int_{0}^t || u(s)||^2_{L^2(\mathbb{H}^n)}ds \right)\left(\int_{0}^t || u_s(s)||^2_{L^2(\mathbb{H}^n)}ds\right),
	\end{align*}
	and
	\begin{align*}
	2 {\rm Re}(u,u_t)\int_{0}^t  {\rm Re}(u(s),u_s(s))ds &\leq 2 ||u||_{L^2(\mathbb{H}^n)} ||u_t||_{L^2(\mathbb{H}^n)}\\
	&\times  \left(\int_{0}^t || u(s)||^2_{L^2(\mathbb{H}^n)}ds\right)^{1/2}\left( \int_{0}^t || u_s(s)||^2_{L^2(\mathbb{H}^n)}ds\right)^{1/2}\\
	\leq ||u||^2_{L^2(\mathbb{H}^n)} &\int_{0}^t || u_s(s)||^2_{L^2(\mathbb{H}^n)}ds + ||u_t||^2_{L^2(\mathbb{H}^n)} \int_{0}^t || u(s)||^2_{L^2(\mathbb{H}^n)}ds.
	\end{align*}
	Using the above inequalities, we calculate 
	\begin{align*}
	(M'(t))^2 &= 4 \left( |{\rm Re}(u,u_t)|^2 + 2b {\rm Re}(u,u_t) \int_0^t  {\rm Re}(u(s),u_s(s))ds + b^2 \left(\int_0^t  {\rm Re}(u(s),u_s(s))ds\right)^2 \right) \\
	&\leq 4 \left( ||u||^2_{L^2(\mathbb{H}^n)} + b \int_{0}^t ||u(s)||^2_{L^2(\mathbb{H}^n)} ds  \right)\left( ||u_t||^2_{L^2(\mathbb{H}^n)} + \int_{0}^t ||u_s(s)||^2_{L^2(\mathbb{H}^n)} ds \right),
	\end{align*}
	for all $0\leq t \leq T_0$. The second derivate with respect to time of $M(t)$ is
	\begin{align*}
	M''(t) = 2||u_t||^2_{L^2(\mathbb{H}^n)} - 2 I(u),
	\end{align*}
	for all $0\leq t\leq T_0$, where we used the equality from \eqref{eq-*}.  Then we construct the differential inequality as follows
	\begin{align*}
	&M''(t) M(t) - \frac{\omega+3}{4}  (M'(t))^2 \geq M(t)\left( M''(t) - (\omega +3)\left( ||u_t||^2 + b\int_{0}^t ||u_s(s)||^2_{L^2(\mathbb{H}^n)} ds \right) \right)\\
	&=M(t) \left( -(\omega+1) ||u_t||^2_{L^2(\mathbb{H}^n)} - (\omega +3)b\int_{0}^t ||u_s(s)||^2_{L^2(\mathbb{H}^n)} ds -2I(u)\right),
	\end{align*}
	where we assume that $\omega >1$. We shall now show that the following term is nonnegative 
	\begin{align*}
	\eta(t) &= -(\omega+1) ||u_t||^2_{L^2(\mathbb{H}^n)} - (\omega +3)b\int_{0}^t ||u_s(s)||^2_{L^2(\mathbb{H}^n)} ds -2I(u)\\
	&\geq (\alpha -\omega -1) ||u_t||^2_{L^2(\mathbb{H}^n)} + b(2\alpha - \omega -3) \int_{0}^t ||u_s(s)||^2_{L^2(\mathbb{H}^n)} ds\\
	& + m(\alpha -2)||u||^2_{L^2(\mathbb{H}^n)} + (\alpha -2)||\nabla_{H} u||^2_{L^2(\mathbb{H}^n)} -2\alpha E(0) \\
	& =  (\alpha -\omega -1) \left[||u_t||^2_{L^2(\mathbb{H}^n)}  + (b+1) ||u||^2_{L^2(\mathbb{H}^n)}\right] + (\alpha -2)||\nabla_{H} u||^2_{L^2(\mathbb{H}^n)} -2\alpha E(0) \\
	&+ b(2\alpha - \omega -3) \int_{0}^t ||u_s(s)||^2_{L^2(\mathbb{H}^n)} ds + (m(\alpha -2 )-  (b+1) (\alpha - \omega -1)|)||u||^2_{L^2(\mathbb{H}^n)} \\
	& \geq  (\alpha -\omega -1) \left[2{\rm Re}(u,u_t)  + b ||u||^2_{L^2(\mathbb{H}^n)}\right] + (\alpha -2)||\nabla_{H} u||^2 _{L^2(\mathbb{H}^n)}-2\alpha E(0) \\
	&+ b(2\alpha - \omega -3) \int_{0}^t ||u_s(s)||^2_{L^2(\mathbb{H}^n)} ds + (m(\alpha -2) -  (b+1) (\alpha - \omega -1)|)||u||^2_{L^2(\mathbb{H}^n)}.
	\end{align*}
	In the second line that we have used \eqref{eq-EA}.
	By selecting $\omega = \alpha-1 - \frac{m(\alpha -2)}{\mu+1}$ which satisfies $\omega>1$ since $\mu +1>m$ and using the argument from Step I, we obtain
	\begin{align*}
	\eta(t)& > \frac{m(\alpha -2)}{\mu+1}  (2 {\rm Re}(u,u_t)  + b ||u||^2_{L^2(\mathbb{H}^n)}) -2\alpha E(0)\\
	&>\frac{m(\alpha -2)}{\mu+1}   (2 {\rm Re}(u_0,u_1)  + b ||u_0||^2_{L^2(\mathbb{H}^n)}) -2\alpha E(0) \\
	& > \left(\frac{m(\alpha -2)}{\mu+1}\right)   2{\rm Re}(u_0,u_1) -2\alpha E(0)\\
	& \geq 0,
	\end{align*}
 Note that we have used the fact $A'(t)>0$ and the expression \eqref{eq-A} with $A(t)=2 {\rm Re}(u,u_t)  + b ||u||^2_{L^2(\mathbb{H}^n)}$, and the condition \eqref{eq-thm-cond} in the last line, respectively. So we obtain the inequality
	\begin{equation*}
	M''(t) M(t) - \frac{\omega+3}{4}  (M'(t))^2 > 0.
	\end{equation*}
	Then 
	\begin{equation*}
	\frac{d}{dt} \left[ \frac{M'(t)}{M^{\frac{\omega+3}{4}}(t)} \right] > 0  \Rightarrow 	\begin{cases}
	M'(t) \geq \left[ \frac{M'(0)}{M^{\frac{\omega+3}{4}}(0)} \right] M^{\frac{\omega+3}{4}}(t),\\
	M(0)=(bT_0+1)|| u_0||_{L^2(\mathbb{H}^n)}^2.
	\end{cases}
	\end{equation*}
Let us denote $\sigma = \frac{\omega -1}{4}$. Then we have 
\begin{equation*}
	-\frac{1}{\sigma} \left[ M^{-\sigma}(t) - M^{-\sigma}(0) \right] \geq \frac{M'(0)}{M^{\sigma +1}(0)} t,
\end{equation*}
that gives
\begin{align*}
	M(t) \geq \left( \frac{1}{M^{\sigma}(0)} - \frac{\sigma M'(0)}{M^{\sigma+1}(0)}t \right)^{-\frac{1}{\sigma}}.
\end{align*}
Then the blow-up time $T^*$ satisfies
\begin{equation*}
	0<T^* \leq \frac{M(0)}{\sigma M'(0)},
\end{equation*}
where $M'(0)= 2{\rm Re} (u_0,u_1)$.	This completes the proof.
\end{proof}
			
\end{document}